\newtheorem{thm}{Theorem}[section]
\newtheorem{prop}{Proposition}[section]
\newtheorem{cor}{Corollary}[section]
\newtheorem{rmk}{Remark}[section]
\newcommand{\R}{{\mathbb R}}
	\newcommand{\C}{{\mathbb C}}
	\newcommand{\ds}{\displaystyle}
	\newcommand{\To}{\longrightarrow}
	\DeclareMathOperator*\re{Re}
	\DeclareMathOperator*\im{Im}
\newcommand{\p}{\partial}
\def\1{\^{\i}}
\def\2{\u{a}}
\def\3{\c{s}}
\def\4{\^{a}}
\def\5{\c{t}}
\def\b{\beta}
\def\g{\gamma}
\def\<{\langle}
\def\>{\rangle}
\begin{document}
\title{Injectivity and univalence of complex functions via  monotonicity}
\author{ Szil\'ard L\' aszl\' o$^*$}
\thanks{$^*$This work was supported by a grant of the Romanian Ministry of Education, CNCS - UEFISCDI, project number PN-II-RU-PD-2012-3 -0166.}
\address{S. L\' aszl\' o, Department of Mathematics, Technical University of Cluj Napoca,
Str. Memorandumului nr. 28, 400114 Cluj-Napoca, Romania.}
\email{szilard.laszlo@math.utcluj.ro}

\begin{abstract}
In this paper we provide sufficient conditions that ensure the monotonicity, respectively the global injectivity of an operator. Further, some new analytical conditions that assure the injectivity/univalence of a complex function of one complex variable are obtained. We also show that some classical results, such as Alexander-Noshiro-Warschawski and Wolff theorem or Mocanu theorem, are easy consequences of our results.
\newline

\emph{Keywords:}{ monotone operator; injective operator; complex function; univalent function}

\emph{MSC:} 47H05; 30C55; 30C99
\end{abstract}

\maketitle

\section{Introduction}

One of the most celebrated  results that provides the univalence of a holomorphic function $f:D\subseteq \C\To \C,\, f=u+iv,$ is $\re f'(z)>0$ for all $z\in D.$ However, it can  easily be shown (see for instance \cite{KaPiLa1}) that this condition is equivalent to the strict monotonicity of the vector function $f=(u,v),$ and it is well known that  strictly monotone operators are injective. On the other hand the mentioned univalence condition is a particular case of Alexander-Noshiro-Warschawski and Wolff theorem (for $\g=0$, see \cite{Mb,No,Wa,Wo}), and the latter cannot be deduced by using the  classical strict monotonicity concept of an operator.

Let us mention that several injectivity conditions for operators that are monotone in some sense were obtained recently  in \cite{KaPiLa1,KaPiLa2} and \cite{Pi}. These results were applied then to obtain some injectivity/univalence results for complex functions of one complex variable. In this paper we deal with operators which are monotone relative to another operator. We obtain some sufficient (analytical) conditions that ensure this monotonicity property. We also show that operators having this monotonicity property are injective under some circumstances. By combining the mentioned results we obtain some analytical conditions that ensure injectivity of an operator.
We also extend the well-known injectivity result expressed in terms of positive definiteness of the symmetric part of all Fr\`echet differentials of operators of class $C^1$.

In the last section we apply these results to obtain some unknown injectivity, respectively univalence results for complex functions of one complex variable. As immediate consequences of our main result we obtain Mocanu theorem, respectively Alexander-Noshiro-Warschawski and Wolff theorem concerning on injectivity, respectively univalence of complex functions.

\section{Analytical conditions for monotonicity and injectivity}

 Let $\big(H,\<\cdot,\cdot\>\big)$ be a real Hilbert space identified with its  topological dual.  Consider the operator 	$T:D\subseteq H\To H$ and let $A:H\To H$ be another operator. We say that the operator $T$ is {\it monotone  relative to}  $A$ if for all $x,y\in D$ one has
	\begin{equation}\label{e1}
	\<T(x)-T(y),A(x)-A(y)\>\ge 0.
	\end{equation}
	$T$ is called {\it strictly monotone relative to} $A$ if in (\ref{e1}) equality holds only for $A(x)=A(y).$

For $x,y\in H$ let us denote by $(x,y)$ the open line segment with the endpoints $x,$ respectively $y$, i.e.
$$(x,y)=\{x+t(y-x):0<t<1\}.$$
Let $D\subseteq H$ be open. For a differentiable operator $T:D\To H$, we denote by $dT_x(\cdot)$ the Fr\`{e}chet differential of $T$ at $x\in D.$ In what follows we provide an analytical condition that ensures the monotonicity of an operator relative to another operator.
	
\begin{prop}\label{p31} Let $D\subseteq H$ be an open and convex set, let  $T:D\To H$ be an operator of class $C^1$ and let $A:H\To H$ be an operator. Assume that for all $x,y\in D,\mbox{ with } A(x)\neq A(y)$ and $z\in (x,y)$ one has
$$\<dT_{z}(y-x),A(y)-A(x)\>>0.$$
Then $T$ is strictly monotone relative to $A$.
\end{prop}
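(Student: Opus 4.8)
The plan is to fix $x,y\in D$ and reduce the inner product in (\ref{e1}) to a one–variable calculus problem along the segment joining $x$ and $y$. If $A(x)=A(y)$ there is nothing to prove, since then $\<T(x)-T(y),A(x)-A(y)\>=0$; so assume $A(x)\neq A(y)$ and put $w:=A(y)-A(x)\neq 0$. Since $D$ is convex, the closed segment $[x,y]$ is contained in $D$, and since $D$ is open the set $I:=\{t\in\R:x+t(y-x)\in D\}$ is an open interval containing $[0,1]$. Define $\varphi:I\To\R$ by $\varphi(t)=\<T(x+t(y-x)),w\>$.

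The key point is that $\varphi$ is differentiable on $I$ with $\varphi'(t)=\<dT_{x+t(y-x)}(y-x),w\>$. This is the chain rule: $t\mapsto x+t(y-x)$ is affine with derivative the constant vector $y-x$, $T$ is Fr\`echet differentiable on $D$, and $\<\cdot,w\>:H\To\R$ is a bounded linear functional, hence equal to its own differential; that $T$ is of class $C^1$ moreover makes $\varphi'$ continuous on $I$. By the hypothesis, applied with $z=x+t(y-x)\in(x,y)$ for each $t\in(0,1)$ (here $A(x)\neq A(y)$ is exactly the standing assumption needed to invoke it), we obtain $\varphi'(t)>0$ for all $t\in(0,1)$.

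Consequently $\varphi$ is strictly increasing on $[0,1]$: for $0\le s<t\le 1$ the mean value theorem gives $\varphi(t)-\varphi(s)=\varphi'(\xi)(t-s)>0$ for some $\xi\in(s,t)$. In particular $\varphi(1)>\varphi(0)$, i.e. $\<T(y),w\>>\<T(x),w\>$, which rearranges to $\<T(y)-T(x),A(y)-A(x)\>>0$, equivalently $\<T(x)-T(y),A(x)-A(y)\>>0$. Hence (\ref{e1}) holds for all $x,y\in D$, and equality in (\ref{e1}) can occur only when $A(x)=A(y)$; that is, $T$ is strictly monotone relative to $A$.

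There is no deep obstacle here; the step that most deserves care is the differentiation under the inner product together with the chain rule in the Fr\`echet setting, and — just as importantly — the bookkeeping of domains: convexity of $D$ is precisely what guarantees that the entire segment $[x,y]$, and therefore every intermediate point $z\in(x,y)$, lies in $D$, so that the standing hypothesis is legitimately applicable at each such $z$.
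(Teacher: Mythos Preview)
Your proof is correct and follows essentially the same route as the paper: reduce to the one-variable function $\varphi(t)=\<T(x+t(y-x)),A(y)-A(x)\>$ on $[0,1]$ and use the mean value theorem together with the hypothesis at an interior point of the segment. The only difference is cosmetic---you argue $\varphi'(t)>0$ for all $t\in(0,1)$ and hence $\varphi(1)>\varphi(0)$, while the paper applies the mean value theorem once to get $\varphi(1)-\varphi(0)=\varphi'(c)>0$ directly.
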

\begin{proof}  Let $x,y\in D$ such that $A(x)\neq A(y).$ We show that $\<T(y)-T(x),A(y)-A(x)\>>0.$

Consider the real function $\phi:[0,1]\To\R,\, \phi(t)=\<T(x+t(y-x)),A(y)-A(x)\>.$ Then $\phi$ is contionuous on $[0,1]$ and differentiable on $(0,1)$, hence according to mean value theorem, there exists $c\in(0,1)$ such that $\phi'(c)=\phi(1)-\phi(0).$ Equivalently, we can state
$$\<dT_{x+c(y-x)}(y-x),A(y)-A(x)\>=\<T(y)-T(x),A(y)-A(x)\>.$$
On the other hand $c\in(0,1)$ implies $x+c(y-x)\in(x,y)$ and by the hypothesis of theorem we have
$$\<dT_{x+c(y-x)}(y-x),A(y)-A(x)\>>0,$$
which shows that
$$\<T(y)-T(x),A(y)-A(x)\>>0.$$
\end{proof}

\begin{rmk}\label{r31}\rm Note that  the condition $\<dT_{z}(y-x),A(y)-A(x)\><0$  for all $x,y\in D,\,x\neq y$ and $z\in (x,y)$  ensures that $T$ is strictly monotone relative to $-A.$

It can be  analogously proved that the condition $\<dT_{z}(y-x),A(y)-A(x)\>\ge0$ for all $x,y\in D$ and $z\in (x,y),$ ensures that $T$ is  monotone  relative to $A$.
\end{rmk}

For a linear operator $A:H\To H$ we denote by $\ker A$ the set of zeroes of $A$, that is
$$\ker A=\{x\in H: A(x)=0\}.$$
\begin{cor}\label{c31} Let $D\subseteq H$ be an open and convex set, let  $T:D\To H$ be an operator of class $C^1$ and let $A:H\To H$ be a linear operator. Assume that for all $x\in D$ and $y\in H\setminus\ker A$ one has
$$\<dT_{x}(y),A(y)\>>0.$$
Then $T$ is strictly monotone relative to $A$.
\end{cor}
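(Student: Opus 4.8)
The plan is to deduce Corollary~\ref{c31} directly from Proposition~\ref{p31} by verifying the hypothesis of the proposition in the special case where $A$ is linear. The key observation is that when $A$ is linear, the difference $A(y)-A(x)$ appearing in \eqref{e1} simplifies to $A(y-x)$, and similarly the condition $A(x)\neq A(y)$ becomes $A(y-x)\neq 0$, i.e. $y-x\notin\ker A$.

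First I would fix arbitrary $x,y\in D$ with $A(x)\neq A(y)$ and an arbitrary point $z\in(x,y)$; since $D$ is convex we have $z\in D$, so $dT_z$ is defined. Setting $w:=y-x$, linearity of $A$ gives $A(y)-A(x)=A(w)$, and the assumption $A(x)\neq A(y)$ is exactly $A(w)\neq 0$, that is $w\in H\setminus\ker A$. Applying the hypothesis of the corollary with the point $z\in D$ (in place of $x$) and the vector $w\in H\setminus\ker A$ (in place of $y$) yields
$$\<dT_z(y-x),A(y)-A(x)\>=\<dT_z(w),A(w)\>>0.$$
Since this holds for all such $x,y,z$, the hypothesis of Proposition~\ref{p31} is satisfied, and the proposition gives that $T$ is strictly monotone relative to $A$.

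There is essentially no obstacle here — the argument is a one-line translation using linearity of $A$ — but the one point that deserves care is the quantifier matching: the corollary's hypothesis ranges over \emph{all} $x\in D$, and in particular over the segment point $z$, so no extra regularity or closure assumption is needed; and it ranges over all $w\in H\setminus\ker A$, which is precisely the set of vectors $w$ that can arise as $y-x$ when $A(y)\neq A(x)$, so the hypothesis is used with exactly the right generality. One should also remark (as the paper does elsewhere) that $\ker A$ is a subspace, so $y-x\notin\ker A$ is equivalent to $A(y)\neq A(x)$ in both directions, which is what makes the reduction clean.
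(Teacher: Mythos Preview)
Your proof is correct and follows essentially the same route as the paper: both arguments verify the hypothesis of Proposition~\ref{p31} by using linearity of $A$ to rewrite $A(y)-A(x)$ as $A(y-x)$ and to recast the condition $A(x)\neq A(y)$ as $y-x\notin\ker A$, then invoke the corollary's hypothesis at the segment point. The only difference is notational (the paper relabels the points as $u,v$ and the segment point as $w$ to avoid clashing with the variables $x,y$ already used in the corollary's statement).
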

\begin{proof} Indeed, let $ u,v\in D,\mbox{ with } A(u)\neq A(v).$ Take $y=v-u$  and $x=w\in(u,v).$ Since  $A(u)\neq A(v)$ we have $A(y)\neq 0$, hence $y\in D\setminus\ker A.$

Consequently, the condition $\<dT_{x}(y),A(y)\>>0$  for all $y\in H\setminus\ker A$ becomes
$$\<dT_{w}(v-u),A(v)-A(u)\>>0,\,\forall u,v\in D,\mbox{ with } A(u)\neq A(v).$$
The conclusion follows from Proposition \ref{p31}.
\end{proof}

\begin{rmk}\label{r32}\rm If we assume that for all $x\in D$ and $y\in H\setminus \ker A$ one has
$\<dT_{x}(y),A(y)\><0,$
we obtain that $T$ is strictly monotone relative to $-A$.
\end{rmk}

Next we provide some conditions that ensure the injectivity of an  operator which is monotone relative to an operator $A$.

\begin{prop}\label{p32} An operator $T:D\subseteq H\To H$  which is strictly monotone relative to $A:H\To H,$  is injective on $D\setminus\{x\in D: \exists y\in D,\,x\neq y,\,A(x)=A(y)\}.$  If $A$ is injective on $D$, that is, for all $x,y\in D,\, x\neq y$ one has $A(x)\neq A(y)$, then $T$  is also injective on its whole domain.
\end{prop}
\begin{proof} Indeed, for $u,v\in D\setminus\{x\in D: \exists y\in D,\,x\neq y,\,A(x)=A(y)\},\,u\neq v$ one has $A(u)\neq A(v)$, hence
$$\<T(u)-T(v),A(u)-A(v)\>>0.$$
The latter relation shows that $T(u)\neq T(v).$

If $A$ is injective on $D$, then $\{x\in D: \exists y\in D,\,x\neq y,\,A(x)=A(y)\}=\emptyset ,$ hence $T$ is injective on $D.$
\end{proof}

Combining the results obtained so far we obtain the following.

\begin{thm}\label{t31} Let $D\subseteq H$ be an open and convex set, let  $T:D\To H$ be an operator of class $C^1$ and let $A:H\To H$ be an   operator injective on $D$. Assume that one of the following conditions hold.
\begin{itemize}
\item[(a)] For all $x,y\in D$ with $A(x)\neq A(y)$ and $z\in (x,y)$ one has
$$\<dT_{z}(y-x),A(y)-A(x)\>>0.$$
\item[(b)] $A$ is linear and for all $x\in D$ and $y\in H\setminus\ker A$ one has
$$\<dT_{x}(y),A(y)\>>0.$$
\end{itemize}
Then $T$ is injective.
\end{thm}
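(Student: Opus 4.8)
The plan is to assemble the theorem directly from the three building blocks already established, treating the two hypotheses (a) and (b) in parallel and funnelling both into Proposition \ref{p32}.

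First I would observe that in either case the operator $T$ is strictly monotone relative to $A$. Indeed, if condition (a) holds, this is exactly the hypothesis of Proposition \ref{p31}, which yields strict monotonicity of $T$ relative to $A$ immediately. If instead condition (b) holds, then $A$ is linear and the inequality $\<dT_{x}(y),A(y)\>>0$ holds for every $x\in D$ and every $y\in H\setminus\ker A$; this is precisely the hypothesis of Corollary \ref{c31}, whose conclusion is again that $T$ is strictly monotone relative to $A$. So after a short case split we are, in both situations, in possession of the single fact that $T$ is strictly monotone relative to $A$.

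Second, I would invoke the injectivity of $A$ on $D$, which is part of the standing hypotheses. By Proposition \ref{p32}, an operator that is strictly monotone relative to $A$ is injective on $D\setminus\{x\in D:\exists y\in D,\,x\neq y,\,A(x)=A(y)\}$; and when $A$ is injective on $D$ this exceptional set is empty, so $T$ is injective on all of $D$. Combining this with the previous paragraph closes the argument.

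There is essentially no genuine obstacle here: the theorem is a packaging statement, and the only point requiring a word of care is the routing of hypothesis (b) through Corollary \ref{c31} rather than directly through Proposition \ref{p31} (the linear structure and the description of the "bad" directions as $H\setminus\ker A$ are what make that reduction legitimate). Everything else is a direct citation of the preceding results, so the write-up will be just a few lines.
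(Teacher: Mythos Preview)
Your proposal is correct and follows exactly the paper's own approach: case (a) via Proposition~\ref{p31}, case (b) via Corollary~\ref{c31}, and in both cases injectivity of $T$ via Proposition~\ref{p32} using the injectivity of $A$ on $D$. The paper's proof is literally one sentence citing these same results, so your write-up need be no longer.
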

\begin{proof} The conclusion follows from Proposition \ref{p31} and Proposition \ref{p32}, respectively  from Corollary \ref{c31} and Proposition \ref{p32}.
\end{proof}

\begin{rmk}\label{r33}\rm Since $A$ is injective on $D$ if and only if $-A$ is injective on $D$, according to Remark \ref{r31}, respectively Remark \ref{r32}, the conditions

\item[(a)] For all $x,y\in D,\, A(x)\neq A(y)$ and $z\in (x,y)$ one has
$$\<dT_{z}(y-x),A(y)-A(x)\><0,$$
 respectively
 \item[(b)] $A$ is linear and for all $x\in D$ and $y\in H\setminus \ker A$ one has
$$\<dT_{x}(y),A(y)\><0,$$
also assure the injectivity of $T.$
\end{rmk}

Consider now $H=\R^n$ endowed with the usual euclidian scalar product, let $T:D\subseteq \R^n\To\R^n,\,T=(t_1,t_2,\ldots,t_n)$
 be an operator of class $C^1$ and let  $A:\R^n\To\R^n$ be a linear operator. For $x^0=(x_1^0,x_2^0,\ldots x_n^0)\in D$ we denote by $J_T(x^0)$ the Jacobian matrix of $T$ in $x^0$, i.e.
$$J_T(x^0)=\ds\begin{pmatrix}\ds \frac{\p t_1}{\p x_1}(x^0)&\ds\frac{\p t_1}{\p x_2}(x^0)&\cdots&\ds \frac{\p t_1}{\p x_n}(x^0)\\
 \ds\frac{\p t_2}{\p x_1}(x^0)&\ds\frac{\p t_2}{\p x_2}(x^0)&\cdots& \ds\frac{\p t_2}{\p x_n}(x^0)\\
\vdots&\vdots&\cdots&\vdots\\
\ds\frac{\p t_n}{\p x_1}(x^0)&\ds\frac{\p t_n}{\p x_2}(x^0)&\cdots&\ds \frac{\p t_n}{\p x_n}(x^0)
\end{pmatrix}.$$

Note that the linear operator $A$ can be identified with a real square matrix $(a_{ij})_{1\le i,j\le n}$. Let us denote by $A^\top$ the transpose of $A.$ For a given square matrix $B$ of order $n$ we denote  the submatrix obtained by deleting the last $n-m$ rows and the last $n-m$ columns by $(B)_{1\le i,j\le m}.$

\begin{thm}\label{t32} Let $D\subseteq \R^n$ be an open and convex set, let  $T:D\To \R^n$ be an operator of class $C^1$ and let $A:R^n\To \R^n$ be a linear  operator with $\det A\neq 0.$ Assume that for all $x\in D$  one of the following conditions hold.
\begin{itemize}
\item[(a)] $\det(A^\top J_T(x)+ J^\top_T(x)A)_{1\le i,j\le m}>0,\, \forall m\in\{1,2,\ldots,n\}.$
\item[(b)] $(-1)^m\det(A^\top J_T(x)+ J^\top_T(x)A)_{1\le i,j\le m}>0,\, \forall m\in\{1,2,\ldots,n\}.$
\end{itemize}
Then $T$ is injective.
\end{thm}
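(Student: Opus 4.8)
The plan is to deduce Theorem \ref{t32} from Theorem \ref{t31} (in the form of part (b), together with its sign-reversed variant in Remark \ref{r33}) by reading the determinantal hypotheses as definiteness statements about a quadratic form. For each $x\in D$ set
$$S(x):=A^\top J_T(x)+J_T^\top(x)A,$$
a symmetric $n\times n$ matrix whose leading principal submatrix of order $m$ is exactly $(A^\top J_T(x)+J_T^\top(x)A)_{1\le i,j\le m}$. Since $S(x)$ is symmetric, Sylvester's criterion applies. Hypothesis (a) states that all leading principal minors of $S(x)$ are positive, hence $S(x)$ is positive definite. Hypothesis (b) states that $(-1)^m\det(S(x))_{1\le i,j\le m}>0$ for every $m$; since the order-$m$ leading principal minor of $-S(x)$ equals $(-1)^m$ times that of $S(x)$, this says precisely that $-S(x)$ is positive definite, i.e. $S(x)$ is negative definite.

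Next I would identify the bilinear form occurring in Theorem \ref{t31}(b). Working in $H=\R^n$, we have $dT_x(y)=J_T(x)y$ and $A(y)=Ay$, so for $x\in D$ and $y\in\R^n$
$$\<dT_{x}(y),A(y)\>=\<J_T(x)y,Ay\>=y^\top J_T^\top(x)Ay.$$
As this is a scalar it coincides with its transpose $y^\top A^\top J_T(x)y$, and averaging the two gives the key identity
$$\<dT_{x}(y),A(y)\>=\tfrac12\,y^\top\big(A^\top J_T(x)+J_T^\top(x)A\big)y=\tfrac12\,y^\top S(x)y.$$
Moreover $\det A\neq0$ forces $\ker A=\{0\}$, so $A$ is linear and injective on $D$, and $\R^n\setminus\ker A=\R^n\setminus\{0\}$.

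To finish: if (a) holds for all $x\in D$, then $S(x)$ is positive definite for every $x$, so $\<dT_{x}(y),A(y)\>=\tfrac12 y^\top S(x)y>0$ for all $x\in D$ and all $y\in\R^n\setminus\ker A$; Theorem \ref{t31}(b) then yields that $T$ is injective. If instead (b) holds for all $x\in D$, then $S(x)$ is negative definite for every $x$, so $\<dT_{x}(y),A(y)\><0$ for all such $x$ and $y$, and the injectivity of $T$ follows from the variant (b) recorded in Remark \ref{r33}. (On the convex, hence connected, set $D$ the $1\times1$ minor $(A^\top J_T(x)+J_T^\top(x)A)_{11}$ is a continuous nonvanishing function, so only one of the two alternatives can occur, and it does so uniformly on $D$; thus the two cases above are exhaustive.)

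I do not expect a genuine obstacle here: the argument is a short translation. The only points needing care are the sign bookkeeping in the negative-definite case — recognizing that hypothesis (b) is exactly Sylvester's criterion applied to $-S(x)$ — and the symmetrization identity $y^\top J_T^\top(x)Ay=\tfrac12 y^\top S(x)y$, which is precisely what makes the possibly non-symmetric product $A^\top J_T(x)$ irrelevant and lets its symmetric part control injectivity.
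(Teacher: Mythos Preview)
Your proof is correct and follows essentially the same approach as the paper: you symmetrize $y^\top J_T^\top(x)Ay$ to $\tfrac12 y^\top S(x)y$, read hypotheses (a) and (b) as Sylvester's criteria for positive and negative definiteness of $S(x)$, use $\det A\neq 0$ to get $\ker A=\{0\}$, and invoke Theorem \ref{t31}(b) and Remark \ref{r33}. Your parenthetical about the $1\times1$ minor forcing a uniform choice of alternative on the connected set $D$ is an extra observation not present in the paper, but it does no harm.
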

\begin{proof}  Let $x\in D.$ Then we have $\<dT_{x}(y),A(y)\>=\<A^\top dT_{x}(y),y\>=y^\top A^\top J_T(x) y.$
This shows that the positive definiteness, respectively negative definiteness of $A^\top J_T(x)$ is equivalent to
$$\<dT_{x}(y),A(y)\>>0,\,\forall y\in \R^n\setminus\{0\},$$
respectively
$$\<dT_{x}(y),A(y)\><0,\,\forall y\in \R^n\setminus\{0\}.$$

On the other hand, $y^\top A^\top J_T(x) y=((A^\top J_T(x))^\top y)^\top y=(J_T^\top(x)Ay)^\top y=y^\top ((J_T^\top(x)Ay)^\top)^\top=y^\top J_T^\top(x)Ay,$ hence
$$y^\top A^\top J_T(x) y=\frac12 y^\top \left(A^\top J_T(x)+ J_T^\top(x)A\right)y.$$

 Observe that $$\det(A^\top J_T(x)+ J^\top_T(x)A)_{1\le i,j\le m}>0,\, \forall m\in\{1,2,\ldots,n\}.$$
is actually  Sylvester's criterion for positive definiteness  of the symmetric matrix
$$A^\top J_T(x)+ J^\top_T(x)A,$$
meanwhile the condition $(-1)^m\det(A^\top J_T(x)+ J^\top_T(x)A)_{1\le i,j\le m}>0,\, \forall m\in\{1,2,\ldots,n\}$ is actually  Sylvester's criterion for negative definiteness  of the symmetric matrix
$$A^\top J_T(x)+ J^\top_T(x)A.$$

But the latter relations are equivalent to the positive definiteness, respectively negative definiteness  of  $A^\top J_T(x).$
Hence, we have
$$\<dT_{x}(y),A(y)\>>0,\,\forall y\in \R^n\setminus\{0\},$$
respectively
$$\<dT_{x}(y),A(y)\><0,\,\forall y\in \R^n\setminus\{0\}.$$
Since $\det A\neq 0$ we obtain that $A$ is injective. The conclusion follows from Theorem \ref{t31}, respectively Remark \ref{r33}.
\end{proof}

\section{Injective complex functions}
	
Let us denote by $\C$ the set of complex numbers, that is
$$\C=\{z=x+iy:x,y\in\R,\, i^2=-1\}.$$
For $z=x+iy\in \C$ we denote by $\re z,\, \im z,\,\overline{z}$, respectively $|z|$ the real part, imaginary part, conjugate and absolute value respectively, that is $\re z=x,\,\im z=y,\,\overline{z}=x-iy$ and $|z|=\sqrt{x^2+y^2}.$ Obviously $z\overline{z}=|z|^2.$   Note that the real linear space $\C$ becomes a real Hilbert space with the inner product
$$\<z,w\>=\re z\overline{w}.$$
This real Hilbert space may be identified with the real Hilbert space $\R^2$ endowed with the euclidean scalar product, therefore we can identify $z\in \C$ by $(\re z,\im z)\in\R^2.$

	 Let $D\subseteq \C$ be open. For a complex function of one complex variable $f:D\To\C,\, f(z)=u(x,y)+iv(x,y),\,\forall z=x+iy\in D$  of class $C^1(D),$ we denote by $J_f(z_0)$ the Jacobian matrix of $f$ in $z_0=x_0+iy_0$, i.e. $$J_f(z_0)=\begin{pmatrix} u_x\rq{(x_0,y_0)}&u_y\rq{(x_0,y_0)}\\v_x\rq{(x_0,y_0)}&v_y\rq{(x_0,y_0)}\end{pmatrix}.$$
	 If we consider $f$ as the vector function $(u,v)$ then its differential in $z_0=x_0+iy_0$ can be defined as
	 $$df_{(x_0,y_0)}(p,q)=J_f(z_0)\cdot\begin{pmatrix} p\\q\end{pmatrix},$$
	 hence for $w=p+iq$ the differential of
	$f$ in $z_0$ becomes $$df_{z_0}(w)=(u_x\rq{(x_0,y_0)}p+u_y\rq{(x_0,y_0)}q)+i(v_x\rq{(x_0,y_0)}p+v_y\rq{(x_0,y_0)}q).$$
	The partial derivatives of $f$ are defined as:
	$$\frac{\p f}{\p x}(z_0)=u_x\rq{(x_0,y_0)}+iv_x\rq{(x_0,y_0)},$$
 respectively
 $$\frac{\p f}{\p y}(z_0)=u_y\rq{(x_0,y_0)}+iv_y\rq{(x_0,y_0)}.$$
	Let us introduce the following notations:
	$$\frac{\p f}{\p z}(z_0)=\frac12\left(\frac{\p f}{\p x}(z_0)-i\frac{\p f}{\p y}(z_0)\right),$$
 respectively
 $$\frac{\p f}{\p \overline{z}}(z_0)=\frac12\left(\frac{\p f}{\p x}(z_0)+i\frac{\p f}{\p y}(z_0)\right).$$

The main result of this section is the following general injectivity result.

\begin{thm}\label{t41} Let $D\subseteq\C$ be open and convex and let $f:D\To\C$ be a function of class $C^1.$ Assume that there exist $w_1,w_2\in\C$ such that $\re w_1\im w_2\neq \re w_2\im w_1$ and for all $z\in D$ the following condition holds:
\begin{equation}\label{e3}
\re\left(\frac{\p f}{\p z}(z)w_1+\frac{\p f}{\p \overline{z}}(z)\overline{w_1}\right)+\im\left(\frac{\p f}{\p z}(z)w_2+\frac{\p f}{\p \overline{z}}(z)\overline{w_2}\right)>
\end{equation}
$$ \left|\frac{\p f}{\p z}(z)(w_2-iw_1)+\frac{\p f}{\p \overline{z}}(z)\overline{w_2+iw_1}\right|.
$$  Then $f$ is injective.
\end{thm}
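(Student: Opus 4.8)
The plan is to apply Theorem~\ref{t41}'s machinery, specifically Theorem~\ref{t32}, to the vector function $T=(u,v):D\subseteq\R^2\To\R^2$ associated to $f=u+iv$, with a well-chosen linear operator $A:\R^2\To\R^2$. The natural choice of $A$ is the matrix
$$A=\begin{pmatrix} \re w_1 & \im w_1\\ \re w_2 & \im w_2\end{pmatrix},$$
so that the condition $\re w_1\im w_2\neq\re w_2\im w_1$ is exactly $\det A\neq 0$, making $A$ invertible and hence injective on $D$. The whole proof then reduces to showing that hypothesis~(\ref{e3}) is equivalent to the positive definiteness (or negative definiteness) of the symmetric matrix $A^\top J_f(z)+J_f^\top(z)A$ for every $z\in D$, after which Theorem~\ref{t32} delivers injectivity of $T$, equivalently of $f$.

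The key computational step is to translate the Wirtinger-derivative expression on both sides of (\ref{e3}) into entries of $J_f(z)$. Using $\frac{\p f}{\p z}=\frac12\big((u_x+v_y)+i(v_x-u_y)\big)$ and $\frac{\p f}{\p\overline z}=\frac12\big((u_x-v_y)+i(v_x+u_y)\big)$, one expands $\frac{\p f}{\p z}(z)w_1+\frac{\p f}{\p\overline z}(z)\overline{w_1}$ and takes its real part; a short calculation should show that $\re\big(\frac{\p f}{\p z}w_1+\frac{\p f}{\p\overline z}\overline{w_1}\big)+\im\big(\frac{\p f}{\p z}w_2+\frac{\p f}{\p\overline z}\overline{w_2}\big)$ equals the trace of $A^\top J_f(z)$, i.e. the sum of the diagonal entries of the symmetric matrix $\frac12\big(A^\top J_f(z)+J_f^\top(z)A\big)$ times $2$. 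Similarly, the modulus on the right-hand side should work out to (twice) the off-diagonal-plus-difference-of-diagonals data that, together with the trace, controls the eigenvalues of a $2\times 2$ symmetric matrix. Concretely, for a symmetric $2\times2$ matrix $S=\begin{pmatrix} a&b\\b&c\end{pmatrix}$ one has positive definiteness iff $a+c>0$ and $a+c>\sqrt{(a-c)^2+4b^2}$ (equivalently $\det S>0$ and $\mathrm{tr}\,S>0$), and the quantity $\sqrt{(a-c)^2+4b^2}$ is precisely what the modulus term should produce. The aim is thus to verify the algebraic identities
$$\mathrm{tr}\big(A^\top J_f(z)\big)=\re\Big(\tfrac{\p f}{\p z}w_1+\tfrac{\p f}{\p\overline z}\overline{w_1}\Big)+\im\Big(\tfrac{\p f}{\p z}w_2+\tfrac{\p f}{\p\overline z}\overline{w_2}\Big),$$
$$\big|\text{(antisymmetric/diagonal-gap data of }A^\top J_f(z))\big|=\Big|\tfrac{\p f}{\p z}(z)(w_2-iw_1)+\tfrac{\p f}{\p\overline z}(z)\overline{w_2+iw_1}\Big|,$$
and conclude that (\ref{e3}) says exactly $\mathrm{tr}\,S>\sqrt{(a-c)^2+4b^2}\ge 0$ where $S=\tfrac12(A^\top J_f(z)+J_f^\top(z)A)$, which is the positive-definiteness condition.

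Once these identities are in hand, the argument closes quickly: (\ref{e3}) forces $S$ to be positive definite at every $z\in D$, hence by Sylvester's criterion condition~(a) of Theorem~\ref{t32} holds with $n=2$ (the $1\times1$ minor being the positive diagonal entry, the $2\times2$ minor being $\det S>0$), so $T$ is injective on $D$; since $f$ and $T$ have the same fibers, $f$ is injective. One should also remark that if the sign of the left-hand trace-type expression in (\ref{e3}) were negative throughout while still dominating the modulus, one would instead land in case~(b) of Theorem~\ref{t32} via Remark~\ref{r33}; but as written, (\ref{e3}) already implies the trace is positive, so case~(a) suffices.

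The main obstacle I anticipate is purely bookkeeping: carefully expanding the complex combinations $\frac{\p f}{\p z}(w_2-iw_1)+\frac{\p f}{\p\overline z}\overline{w_2+iw_1}$ and matching real and imaginary parts against the correct combination of the four partial derivatives $u_x,u_y,v_x,v_y$ and the four real numbers $\re w_j,\im w_j$. The potential pitfall is sign and factor-of-$\tfrac12$ errors in the Wirtinger formulas and in the symmetrization $y^\top A^\top J_f(z)y=\tfrac12 y^\top(A^\top J_f(z)+J_f^\top(z)A)y$; getting the modulus term to come out as exactly $\sqrt{(a-c)^2+4b^2}$ (and not, say, $2\sqrt{(a-c)^2/4+b^2}$ with a mismatched normalization) requires consistent conventions throughout. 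Beyond that, the logical skeleton — choose $A$, reduce to definiteness of $A^\top J_f+J_f^\top A$, invoke Theorem~\ref{t32} — is straightforward.
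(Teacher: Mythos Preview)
Your overall strategy matches the paper's: translate (\ref{e3}) into a definiteness condition on a $2\times2$ symmetric matrix built from $J_f(z)$ and a fixed invertible matrix, then invoke Theorem~\ref{t32}. The paper writes $w_1=a+ib$, $w_2=c+id$, sets $P=\re df_z(w_1)=au_x+bu_y$, $Q=\im df_z(w_2)=cv_x+dv_y$, $R=\re df_z(w_2)+\im df_z(w_1)$, squares (\ref{e3}) to obtain $4PQ>R^2$, recognizes this as positivity of $\det\left(\begin{smallmatrix}2P&R\\R&2Q\end{smallmatrix}\right)$, and then argues that the $(1,1)$ minor is positive as well (the paper uses the intermediate value theorem for this, though as you correctly observe $P+Q>0$ together with $PQ>0$ already forces $P>0$, so the IVT step is dispensable).

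The gap in your plan is the second identity. For your $A=\left(\begin{smallmatrix}a&b\\c&d\end{smallmatrix}\right)$ the trace formula $\mathrm{tr}(A^\top J_f)=P+Q$ is indeed correct, but the modulus in (\ref{e3}) is \emph{not} the eigenvalue gap of $\tfrac12(A^\top J_f+J_f^\top A)$, and this is not merely a normalization issue: condition (\ref{e3}) does not force your $A^\top J_f+J_f^\top A$ to be definite at all. Take $w_1=1+i$, $w_2=i$ and $J_f=\left(\begin{smallmatrix}2&-1\\0&1\end{smallmatrix}\right)$; then $P=Q=1$, $R=0$, so (\ref{e3}) holds, yet $A^\top J_f+J_f^\top A=\left(\begin{smallmatrix}4&1\\1&0\end{smallmatrix}\right)$ has determinant $-1$. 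The symmetric matrix whose entries are literally $2P,2Q,R$ is $J_fL+L^\top J_f^\top$ with $L=\left(\begin{smallmatrix}a&c\\b&d\end{smallmatrix}\right)$ (check: $(J_fL)_{11}=au_x+bu_y$, etc.); no constant $A$ makes $A^\top J_f+J_f^\top A$ equal to it, since the $(1,1)$ entry of the latter is always $2(A_{11}u_x+A_{21}v_x)$. Once you know $J_fL+L^\top J_f^\top$ is positive definite, substitute $y=L^{-1}h$ in $y^\top J_fLy>0$ to get $h^\top(L^{-1})^\top J_f\,h>0$ for all $h\neq0$, which is exactly the hypothesis of Theorem~\ref{t32} with $A=L^{-1}$. (The paper writes the matrix as $L^\top J_f+J_f^\top L$, which is the same slip in transposed form and fails on the example $w_1=1$, $w_2=1+i$ with the same $J_f$; the repair is identical.)
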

\begin{proof} One can assume that $w_1=a+ib,\,w_2=c+id,\,a,b,c,d\in\R.$ It can easily be deduced, that $(\ref{e3})$ is equivalent to
$$(u'_xa+u'_yb)+(v'_xc+v'_yd)>$$
$$\sqrt{\left((u'_xc+u'_yd)+(v'_xa+v'_yb)\right)^2+\left((v'_xc+v'_yd)-(u'_xa+u'_yb)\right)^2}.$$
By taking the square of both sides we obtain
$$4(u'_xa+u'_yb)(v'_xc+v'_yd)>\left((u'_xc+u'_yd)+(v'_xa+v'_yb)\right)^2,$$
or equivalently
$$4\re df_z(w_1) \cdot \im df_z(w_2)>(\re df_z(w_2) + \im df_z(w_1))^2,\,\forall z\in D.$$
The latter relation can be written as
\begin{equation}\label{e4}
\det\begin{pmatrix} 2\re df_z(w_1)&\re df_z(w_2)+\im df_z(w_1)\\\re df_z(w_2)+\im df_z(w_1)&2\im df_z(w_2)\end{pmatrix}>0,\,\forall z\in D.
\end{equation}

 Let us denote by $L$ the matrix $\begin{pmatrix} a&c\\b&d\end{pmatrix}.$ Then $(\ref{e4})$ becomes
\begin{equation*}
\det(L^\top J_f(z)+J^\top_f(z)L)>0,\, \forall z\in D .
\end{equation*}


We show next that $\re df_z(w_1)>0$ for all  $z\in D,$ or $\re df_z(w_1)<0$ for all  $z\in D.$ Observe that $(\ref{e4})$ assures that $\re df_z(w_1)\neq 0$ for all $z\in D.$ Assume for instance that $\re df_{z_1}(w_1)>0$ and $\re df_{z_2}(w_1)<0$ for some $z_1,z_2\in D.$ Then, the intermediate value theorem, applied to the function $g:D\To\R,\, g(z)=\re df_z(w_1),$ provides the existence of $z_3\in D$ such that $\re df_{z_3}(w_1)=0,$ contradiction.

In conclusion one of the following conditions is fulfilled.
\begin{itemize}
 \item[(i)] $\re df_z(w_1)>0$ and  $\det(L^\top J_f(z)+J^\top_f(z)L)>0$  for all  $z\in D,$ or
 \item[(ii)] $\re df_z(w_1)<0$ and  $\det(L^\top J_f(z)+J^\top_f(z)L)>0$  for all  $z\in D.$
 \end{itemize}
 Note that $(i)$, respectively $(ii)$ are equivalent to
 \begin{itemize}
 \item[(a)] $\det(L^\top J_f(z)+J^\top_f(z)L)_{1\le i,j\le m}>0,\, \forall m\in\{1,2\},$
respectively
\item[(b)] $(-1)^m\det(L^\top J_f(z)+J^\top_f(z)L)_{1\le i,j\le m}>0,\, \forall m\in\{1,2\}.$
\end{itemize}

Since $\re w_1\im w_2\neq \re w_2\im w_1,$ we obtain that $L$ is invertible,  hence injective. According to Theorem \ref{t32}  $f$ is injective.
\end{proof}

\begin{rmk}\label{r4}\rm
One can easily deduce that for $z\in D$ and $w\in\C$ we have
	$$df_{z}(w)=\frac{\p f}{\p z}(z)w+\frac{\p f}{\p \overline{z}}(z)\overline{w},$$
hence the condition $(\ref{e3})$ in the hypothesis of Theorem \ref{t41}  can be replaced by
$$\re df_z(w_1)+\im df_z(w_2)>|df_z(w_2)-idf_z(w_1)|,\,\forall z\in D.$$
\end{rmk}

The next Corollary can be viewed as an extension of Mocanu's injectivity result.

\begin{cor}\label{c41} Let $D\subseteq\C$ be open and convex and let $f:D\To\C$ be a function of class $C^1.$ Assume that there exist $\g\in\R$ such that, for all $z\in D$ it holds:
$$
\re\left(\frac{\p f}{\p z}(z)e^{i\g}\right)>\left|\frac{\p f}{\p \overline{z}}(z)\right|.$$
  Then $f$ is injective.
\end{cor}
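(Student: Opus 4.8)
The plan is to deduce Corollary \ref{c41} directly from Theorem \ref{t41} by a judicious choice of the vectors $w_1,w_2$. I would take $w_1=e^{i\g}$ and $w_2=ie^{i\g}$ (so that $w_2=iw_1$); the point of this choice is that the coefficient $w_2-iw_1$ of $\frac{\p f}{\p z}(z)$ on the right-hand side of $(\ref{e3})$ then vanishes, leaving there only a term proportional to $\frac{\p f}{\p \overline{z}}(z)$.

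First I would verify the admissibility condition $\re w_1\im w_2\neq\re w_2\im w_1$ of Theorem \ref{t41}: writing $w_1=\cos\g+i\sin\g$ and $w_2=-\sin\g+i\cos\g$ gives $\re w_1\im w_2-\re w_2\im w_1=\cos^2\g+\sin^2\g=1\neq0$, so $w_1,w_2$ are legitimate. Then I would substitute this pair into $(\ref{e3})$ — or, more transparently, into the equivalent form $\re df_z(w_1)+\im df_z(w_2)>|df_z(w_2)-idf_z(w_1)|$ of Remark \ref{r4} — and use $df_z(w)=\frac{\p f}{\p z}(z)w+\frac{\p f}{\p \overline{z}}(z)\overline{w}$. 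Abbreviating $\a=\frac{\p f}{\p z}(z)e^{i\g}$ and $\b=\frac{\p f}{\p \overline{z}}(z)e^{-i\g}$, a short computation yields $df_z(w_1)=\a+\b$ and $df_z(w_2)=i(\a-\b)$, whence $\re df_z(w_1)+\im df_z(w_2)=(\re\a+\re\b)+(\re\a-\re\b)=2\re\a$, while $df_z(w_2)-idf_z(w_1)=-2i\b$, so that $|df_z(w_2)-idf_z(w_1)|=2|\b|=2\left|\frac{\p f}{\p \overline{z}}(z)\right|$. Consequently $(\ref{e3})$ reduces to $2\re\left(\frac{\p f}{\p z}(z)e^{i\g}\right)>2\left|\frac{\p f}{\p \overline{z}}(z)\right|$, which is precisely the hypothesis of the Corollary. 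Theorem \ref{t41} then gives that $f$ is injective.

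This is essentially a one-line reduction, so I do not expect a genuine obstacle; the only points requiring care are the bookkeeping of conjugates (for instance $\overline{ie^{i\g}}=-ie^{-i\g}$ and $\overline{w_2+iw_1}=\overline{2ie^{i\g}}=-2ie^{-i\g}$) and the observation — which is the whole trick — that forcing $w_2-iw_1=0$ collapses the right-hand side of $(\ref{e3})$ to the single term $2\left|\frac{\p f}{\p \overline{z}}(z)\right|$. One could also argue directly from the real formulas appearing in the proof of Theorem \ref{t41} instead of invoking Remark \ref{r4}, but passing through $df_z$ keeps the computation shorter.
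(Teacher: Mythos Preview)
Your proposal is correct and essentially identical to the paper's own proof: the paper also takes $w_1=e^{i\g}$, $w_2=ie^{i\g}$, verifies $\re w_1\im w_2-\re w_2\im w_1=1$, and shows that with this choice the two sides of $(\ref{e3})$ reduce to $2\re\bigl(\frac{\p f}{\p z}(z)e^{i\g}\bigr)$ and $2\bigl|\frac{\p f}{\p \overline{z}}(z)\bigr|$, then invokes Theorem \ref{t41}. The only cosmetic difference is that you route the computation through the $df_z$ form of Remark \ref{r4} (with the convenient abbreviations $\a,\b$), while the paper substitutes directly into $(\ref{e3})$; the content is the same.
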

\begin{proof} Take $w_1=e^{i\g}$ and $w_2=ie^{i\g}.$ Then an easy computation shows, that
$$
2\re\left(\frac{\p f}{\p z}(z)e^{i\g}\right)=\re\left(\frac{\p f}{\p z}(z)w_1+\frac{\p f}{\p \overline{z}}(z)\overline{w_1}\right)+\im\left(\frac{\p f}{\p z}(z)w_2+\frac{\p f}{\p \overline{z}}(z)\overline{w_2}\right).$$
On the other hand
$$2\left|\frac{\p f}{\p \overline{z}}(z)\right|=\left|\frac{\p f}{\p z}(z)(w_2-iw_1)+\frac{\p f}{\p \overline{z}}(z)\overline{w_2+iw_1}\right|.$$
Hence,
$$
\re\left(\frac{\p f}{\p z}(z)e^{i\g}\right)>\left|\frac{\p f}{\p \overline{z}}(z)\right|$$
is equivalent to
$$\re\left(\frac{\p f}{\p z}(z)w_1+\frac{\p f}{\p \overline{z}}(z)\overline{w_1}\right)+\im\left(\frac{\p f}{\p z}(z)w_2+\frac{\p f}{\p \overline{z}}(z)\overline{w_2}\right)>$$
$$\left|\frac{\p f}{\p z}(z)(w_2-iw_1)+\frac{\p f}{\p \overline{z}}(z)\overline{w_2+iw_1}\right|.
$$
Since $\re w_1\im w_2-\re w_2\im w_1=\cos^2\g+\sin^2\g=1$, obviously $\re w_1\im w_2\neq \re w_2\im w_1.$ The conclusion follows from Theorem \ref{t41}.
\end{proof}

\begin{rmk}\label{r41} Note that for $\g=0$ in Corollary \ref{c41}, we obtain Mocanu's injectivity theorem, see \cite{M}.
\end{rmk}

\begin{cor}\label{c42} Let $D\subseteq\C$ be open and convex and let $f:D\To\C$ be a function of class $C^1.$ Assume that there exist $\g\in\R$ such that, for all $z\in D$ it holds:
$$
\re\left(\frac{\p f}{\p \overline{ z}}(z)e^{i\g}\right)>\left|\frac{\p f}{\p {z}}(z)\right|.$$
  Then $f$ is injective.
\end{cor}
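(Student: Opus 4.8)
The plan is to reduce Corollary \ref{c42} to Corollary \ref{c41} by passing to the conjugate function. Set $g:=\overline{f}$, i.e. $g(z)=\overline{f(z)}$ for every $z\in D$. Since complex conjugation is $\R$-linear and commutes with the real partial derivatives $\p/\p x$ and $\p/\p y$, the function $g$ is again of class $C^1$ on $D$, and substituting $\p g/\p x=\overline{\p f/\p x}$, $\p g/\p y=\overline{\p f/\p y}$ into the defining formulas for $\p/\p z$ and $\p/\p\overline{z}$ yields the Wirtinger identities
$$\frac{\p g}{\p z}(z)=\overline{\frac{\p f}{\p \overline{z}}(z)},\qquad \frac{\p g}{\p \overline{z}}(z)=\overline{\frac{\p f}{\p z}(z)},\qquad z\in D.$$
This short computation with signs and conjugates is the only delicate point, so I expect it to be the main (albeit minor) obstacle; everything else is formal.

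The second step is to rewrite the hypothesis. Using $\re\zeta=\re\overline{\zeta}$ and $|\zeta|=|\overline{\zeta}|$ for $\zeta\in\C$, and conjugating inside the real part (so that $\frac{\p f}{\p \overline z}(z)e^{i\g}$ becomes $\overline{\frac{\p f}{\p \overline z}(z)}\,e^{-i\g}$), the assumed inequality
$$\re\left(\frac{\p f}{\p \overline{z}}(z)e^{i\g}\right)>\left|\frac{\p f}{\p {z}}(z)\right|,\qquad z\in D,$$
is equivalent, via the identities above, to
$$\re\left(\frac{\p g}{\p z}(z)e^{-i\g}\right)>\left|\frac{\p g}{\p \overline{z}}(z)\right|,\qquad z\in D,$$
which is precisely the hypothesis of Corollary \ref{c41} for the function $g$ with the real parameter $-\g$.

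The final step is immediate: Corollary \ref{c41} gives that $g$ is injective on $D$, and since conjugation is a bijection of $\C$ onto itself with $\overline{g}=f$, injectivity of $g$ forces injectivity of $f$ — indeed $f(z_1)=f(z_2)$ implies $g(z_1)=\overline{f(z_1)}=\overline{f(z_2)}=g(z_2)$, whence $z_1=z_2$. As an alternative route avoiding Corollary \ref{c41}, one can apply Theorem \ref{t41} directly with $w_1=e^{-i\g}$ and $w_2=-ie^{-i\g}$; here $\re w_1\im w_2-\re w_2\im w_1=-\cos^2\g-\sin^2\g=-1\neq0$, and a direct check using $df_z(w)=\frac{\p f}{\p z}(z)w+\frac{\p f}{\p \overline{z}}(z)\overline{w}$ (Remark \ref{r4}) shows that for this choice the left-hand side of $(\ref{e3})$ equals $2\re\big(\frac{\p f}{\p \overline{z}}(z)e^{i\g}\big)$ while the right-hand side equals $2\big|\frac{\p f}{\p {z}}(z)\big|$, so that $(\ref{e3})$ reduces exactly to the hypothesis of the corollary and Theorem \ref{t41} applies.
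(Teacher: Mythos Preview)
Your proof is correct. The alternative route you give at the end --- applying Theorem \ref{t41} directly with $w_1=e^{-i\g}$ and $w_2=-ie^{-i\g}$ --- is essentially the paper's own argument: the paper picks $w_1=e^{i\g}$, $w_2=-ie^{i\g}$ (which differs from your choice only by replacing $\g$ with $-\g$, immaterial since $\g$ is existentially quantified) and declares the remaining computation ``analogous to the proof of Corollary \ref{c41}.'' Your primary route via $g=\overline{f}$ is genuinely different and arguably cleaner: the Wirtinger identities $\p g/\p z=\overline{\p f/\p\overline z}$ and $\p g/\p\overline z=\overline{\p f/\p z}$ convert the hypothesis directly into that of Corollary \ref{c41} for $g$ with parameter $-\g$, so no repetition of the $(w_1,w_2)$ calculation is needed. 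The paper's approach keeps everything inside the framework of Theorem \ref{t41} and emphasises the structural parallel with Corollary \ref{c41} at the level of parameter choices; your conjugation trick makes the duality between the two corollaries visible at the level of the function itself, at the small cost of invoking an auxiliary $g$.
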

\begin{proof} Take $w_1=e^{i\g}$ and $w_2=-ie^{i\g}.$ Then $\re w_1\im w_2-\re w_2\im w_1=-\cos^2\g-\sin^2\g=-1$, hence $\re w_1\im w_2\neq \re w_2\im w_1.$  The rest of the proof is analogous to the proof of Corollary \ref{c41}.
\end{proof}

Let $D$ be open and connected. Recall that a function $f:D\subseteq \C\To\C$ is called holomorphic on $D$ if $f$ is derivable at every point of $D.$
Note, that in the case when $f$ is holomorphic on $D$, one has
$$\frac{\p f}{\p \overline{z}}(z)=0, \mbox{  for all }z\in D,$$ and
$$df_{z}(w)=\frac{\p f}{\p z}(z)w=f'(z)w,\mbox{ for all }z\in D\mbox{  and }w\in \C.$$ A holomorphic function which is also injective is called univalent. The next result is an extension of the univalence result of Alexander-Noshiro-Warschawski and Wolff.

\begin{cor}\label{c43} Let $D\subseteq\C$ be open and convex and let $f:D\To\C$ be a holomorphic function. Assume that there exist $w_1,w_2\in\C$ such that $\re w_1\im w_2\neq \re w_2\im w_1$ and for all $z\in D$ the following condition holds:
\begin{equation*}\label{e6}
\re f'(z)w_1+\im f'(z)w_2>|f'(z)||w_2-iw_1|.
\end{equation*}  Then $f$ is univalent.
\end{cor}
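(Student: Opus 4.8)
The plan is to deduce Corollary \ref{c43} as a direct specialization of Theorem \ref{t41} to the holomorphic case. First I would recall that when $f$ is holomorphic on $D$ one has $\frac{\p f}{\p\overline{z}}(z)=0$ and $\frac{\p f}{\p z}(z)=f'(z)$ for all $z\in D$, as noted in the paragraph preceding the statement. Substituting these two facts into condition \eqref{e3} of Theorem \ref{t41} collapses every term involving $\frac{\p f}{\p\overline{z}}$, so that the left-hand side becomes $\re(f'(z)w_1)+\im(f'(z)w_2)$ and the right-hand side becomes $\left|f'(z)(w_2-iw_1)\right| = |f'(z)|\,|w_2-iw_1|$.

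Thus the hypothesis of the present corollary, namely $\re f'(z)w_1+\im f'(z)w_2 > |f'(z)|\,|w_2-iw_1|$ for all $z\in D$, together with the standing assumption $\re w_1\im w_2\neq\re w_2\im w_1$, is exactly the hypothesis of Theorem \ref{t41} in this setting. I would then invoke Theorem \ref{t41} to conclude that $f$ is injective on $D$; since $f$ is holomorphic and injective, by definition it is univalent, which is the desired conclusion. It is worth remarking (as the paper does in Remark \ref{r4}) that $df_z(w)=f'(z)w$ in the holomorphic case, so the reformulation $\re df_z(w_1)+\im df_z(w_2)>|df_z(w_2)-idf_z(w_1)|$ is visibly the same inequality.

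There is no real obstacle here: the entire content of the corollary is the observation that the Wirtinger derivative $\frac{\p f}{\p\overline{z}}$ vanishes identically for holomorphic maps, so that the two-parameter condition of Theorem \ref{t41} degenerates to the stated form. The only point requiring a word of care is that one should note $|f'(z)(w_2-iw_1)|=|f'(z)|\,|w_2-iw_1|$ by multiplicativity of the modulus, and that $w_2-iw_1\neq 0$ is guaranteed by $\re w_1\im w_2\neq\re w_2\im w_1$ (otherwise $w_2=iw_1$ would force $\re w_1\im w_2-\re w_2\im w_1=0$), so that the inequality is not vacuous. Finally, one should recover the classical Alexander--Noshiro--Warschawski and Wolff theorem by taking $w_1=e^{i\g}$, $w_2=ie^{i\g}$, which gives $w_2-iw_1=ie^{i\g}-ie^{i\g}=0$ in a degenerate choice; more precisely the classical case corresponds to a choice of $w_1,w_2$ making the right-hand side a suitable multiple of $\re(f'(z)e^{i\g})$, exactly as in the proof of Corollary \ref{c41}.
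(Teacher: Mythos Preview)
Your core argument is correct and is exactly the paper's approach: you specialize Theorem \ref{t41} to the holomorphic case by using $\frac{\p f}{\p\overline z}\equiv 0$ and $\frac{\p f}{\p z}=f'$, so that condition \eqref{e3} (equivalently, the reformulation in Remark \ref{r4}) reduces to $\re f'(z)w_1+\im f'(z)w_2>|f'(z)|\,|w_2-iw_1|$, and then invoke Theorem \ref{t41}.

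One small correction to your aside: your claim that the hypothesis $\re w_1\im w_2\neq \re w_2\im w_1$ forces $w_2-iw_1\neq 0$ is false. If $w_2=iw_1$ with $w_1=a+ib$, then $\re w_1\im w_2-\re w_2\im w_1=a^2+b^2=|w_1|^2$, which is nonzero whenever $w_1\neq 0$. In fact the classical Alexander--Noshiro--Warschawski--Wolff choice in Corollary \ref{c44}, namely $w_1=e^{i\g}$ and $w_2=ie^{i\g}$, has $w_2-iw_1=0$ while the determinant equals $1$. This does not affect your proof of the corollary, but you should drop that remark.
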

\begin{proof}According to Remark \ref{r4} the condition
$$\re\left(\frac{\p f}{\p z}(z)w_1+\frac{\p f}{\p \overline{z}}(z)\overline{w_1}\right)+\im\left(\frac{\p f}{\p z}(z)w_2+\frac{\p f}{\p \overline{z}}(z)\overline{w_2}\right)>$$
$$\left|\frac{\p f}{\p z}(z)(w_2-iw_1)+\frac{\p f}{\p \overline{z}}(z)\overline{w_2+iw_1}\right|,$$
 is equivalent to
$$\re df_z(w_1)+\im df_z(w_2)>|df_z(w_2)-idf_z(w_1)|.$$
On the other hand the latter relation is exactly
$$\re f'(z)w_1+\im f'(z)w_2>|f'(z)||w_2-iw_1|.$$
The conclusion follows from Theorem \ref{t41}.
\end{proof}

From the previous result one can easily obtain Alexander-Noshiro-Warschawski and Wolff univalence theorem (see \cite{A} and \cite{No,Wa,Wo}).
\begin{cor}\label{c44} Let $D\in\C$ be open and convex and let $f:D\To\C$ be a holomorphic function. Assume that there exist $\g\in\R$ such that, for all $z\in D$ it holds:
$$ \re f'(z)e^{i\g}>0.$$
Then $f$ is univalent.
\end{cor}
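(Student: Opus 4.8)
The plan is to obtain this statement as an immediate specialization of Corollary \ref{c43}, by choosing the two free parameters $w_1,w_2$ so that the modulus term on the right-hand side of the inequality there collapses to zero. Concretely, I would set $w_1=e^{i\gamma}$ and $w_2=ie^{i\gamma}$. With this choice one has $w_2-iw_1=ie^{i\gamma}-ie^{i\gamma}=0$, so the term $|f'(z)||w_2-iw_1|$ vanishes identically, and the hypothesis of Corollary \ref{c43} reduces to $\re f'(z)w_1+\im f'(z)w_2>0$ for all $z\in D$.

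Next I would simplify the left-hand side using the elementary identity $\im(i\zeta)=\re\zeta$ for $\zeta\in\C$. Since $\im f'(z)w_2=\im\big(if'(z)e^{i\gamma}\big)=\re\big(f'(z)e^{i\gamma}\big)$ and $\re f'(z)w_1=\re\big(f'(z)e^{i\gamma}\big)$, we get $\re f'(z)w_1+\im f'(z)w_2=2\re\big(f'(z)e^{i\gamma}\big)$. Hence the condition required by Corollary \ref{c43} becomes exactly $\re\big(f'(z)e^{i\gamma}\big)>0$ for all $z\in D$, which is precisely the hypothesis of the present Corollary.

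It then remains only to verify the non-degeneracy requirement $\re w_1\im w_2\neq\re w_2\im w_1$. Writing $w_1=\cos\gamma+i\sin\gamma$ and $w_2=ie^{i\gamma}=-\sin\gamma+i\cos\gamma$, one computes $\re w_1\im w_2-\re w_2\im w_1=\cos^2\gamma+\sin^2\gamma=1\neq 0$. Therefore all hypotheses of Corollary \ref{c43} are satisfied, and that corollary yields that $f$ is univalent. I do not expect any genuine obstacle in this argument: the single point requiring care is the bookkeeping that forces the term $|f'(z)||w_2-iw_1|$ to disappear, and this is built into the choice $w_2=iw_1$; everything else is a short computation. (Alternatively, since $f$ is holomorphic one has $\partial f/\partial\overline z\equiv 0$, so the statement follows equally directly from Corollary \ref{c41} with the same $w_1,w_2$.)
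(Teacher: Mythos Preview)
Your proposal is correct and follows essentially the same route as the paper: choose $w_1=e^{i\gamma}$, $w_2=ie^{i\gamma}$, observe that $w_2-iw_1=0$ so the right-hand side of Corollary~\ref{c43} vanishes, compute $\re f'(z)w_1+\im f'(z)w_2=2\re(f'(z)e^{i\gamma})$, and check $\re w_1\im w_2-\re w_2\im w_1=1$. (Your parenthetical alternative via Corollary~\ref{c41} is also valid, though note that \ref{c41} is parametrized by $\gamma$ alone rather than by $w_1,w_2$.)
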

\begin{proof} Indeed, let $w_1=e^{i\g},\,w_2=ie^{i\g}.$ Then
$$\re f'(z)w_1+\im f'(z)w_2=2 \re f'(z)e^{i\g}.$$
	Obviously $|f'(z)||w_2-iw_1|=0,$ hence $ \re f'(z)e^{i\g}>0$ is equivalent to
$$\re f'(z)w_1+\im f'(z)w_2>|f'(z)||w_2-iw_1|.$$
Note that  $\re w_1\im w_2-\re w_2\im w_1=\cos^2\g+\sin^2\g=1$, hence $\re w_1\im w_2\neq \re w_2\im w_1.$ The conclusion follows from Corollary \ref{c43}.
\end{proof}

\noindent{\bf Acknowledgement.} This work was supported by a grant of the Romanian Ministry of Education, CNCS-UEFISCDI, project number PN-II-RU-PD-2012-3-0166.

	\end{document}